\let\csname equation*\endcsname\relax
\let\csname endequation*\endcsname\relax
\def\bn#1{\mathbf{#1}}
\newcommand{\Set}[2]{%
	\{\, #1 \mid #2 \, \}%
}
\newcommand{\df}{\stackrel{def}{=}}
\def \r{\mathbf{r}}
\newtheorem{theorem}{Theorem}
\newtheorem{claim}{Claim}[theorem]
\theoremstyle{definition}
\newtheorem{littheorem}{Literature Theorem}[section]
\newtheorem{definition}{Definition}[section]
\DeclareMathOperator*{\argmin}{arg\,min}
\title{Non-convex regularization based on shrinkage penalty function}
\author{Manu Ghulyani \& Muthuvel Arigovindan}
\begin{document}
\maketitle

{Department of Electrical Engg., Indian Institute of Science, Bengaluru-12, Karnataka, India.}\\
{manug@iisc.ac.in \& mvel@iisc.ac.in }
\vspace{10pt}

\begin{abstract}
	Total Variation regularization (TV) is a seminal approach for image recovery. TV involves the norm of the image's gradient, aggregated over all pixel locations. Therefore, TV leads to piece-wise constant solutions, resulting in what is known as the "staircase effect." To mitigate this effect, the Hessian Schatten norm regularization (HSN) employs second-order derivatives, represented by the pth norm of eigenvalues in the image hessian vector, summed across all pixels. HSN demonstrates superior structure-preserving properties compared to TV. However, HSN solutions tend to be overly smoothed. To address this, we introduce a non-convex shrinkage penalty applied to the Hessian's eigenvalues, deviating from the convex lp norm. It is important to note that the shrinkage penalty is not defined directly in closed form, but specified indirectly through its proximal operation. This makes constructing a provably convergent algorithm difficult as the singular values are also defined through a non-linear operation. However, we were able to derive a provably convergent algorithm using proximal operations. We prove the convergence by  establishing that the proposed regularization adheres to restricted proximal regularity. The images recovered by this regularization were sharper than the convex counterparts.
	\end{abstract}

%
% Uncomment for keywords
%\vspace{2pc}
%\noindent{\it Keywords}: XXXXXX, YYYYYYYY, ZZZZZZZZZ
%
% Uncomment for Submitted to journal title message
%\submitto{\JPA}
%
% Uncomment if a separate title page is required
%\maketitle
% 
% For two-column output uncomment the next line and choose [10pt] rather than [12pt] in the \documentclass declaration
%\ioptwocol
%

\section{Introduction}
Total Variation (TV) \cite{rudintv} is widely applicable because of its ability to preserve edges. But, image reconstruction (restoration) via TV regularization leads to piece-wise constant estimates. This effect is known as the staircase effect. A well-known workaround for the above problem is to use higher-order derivatives \cite{Scherzer_tv2_98, tgv, lysaker2003noise, hessian} of the image rather than only the first-order derivative. The use of higher-order derivatives leads to smooth intensity variations on the edges rather than sharp jumps in the intensity, thereby eliminating staircase artifacts. The above workaround led to many solutions, important ones being TV-2, Total Generalized Variation (TGV), Hessian-Schatten norm, and others. Hessian-Schatten (HS) norm regularization is an important work because of it theoretical properties and good performance for a wide variety of inverse problems \cite{hsn_poisson,cbct,hsn_eg}. Although HS norm regularization leads to good reconstruction quality, it leads to smoothing of solution-images, which is a common drawback of all convex regularizations. Also, it is well known that non-convex regularizations \cite{pnas} lead to sharper images. But, convergence of non-convex and non-smooth optimization algorithms is difficult to establish. Due to these difficulties in optimization of non-convex functionals, there are very few works that explore higher-order derivative based non-convex regularization functionals, and also have convergence guarantees. An important work by \cite{nikolova_tip} explores the properties non-convex potential functionals, and also give an algorithm to solve the image reconstruction problem. It is also important to mention the work by \cite{nikolova2005analysis} that analyses properties of edges of the recovered images via non-convex regularization functionals. There are many works that explore non-convex first-order total variation, for e.g. \cite{chartrand_tpv,shrinkage}. The work by \cite{shrinkage} is important as it provides convergence as well as recovery guarantees of the proposed reconstruction algorithm. To the best of our knowledge, there is no non-convex regularization that exploits the structural information encoded in the singular values of the image hessian. This is because computing these singular values involves a non-linear operation without a known closed-form solution. In this work, we derive a non-convex regularization inspired from the Hessian-Schatten norm \cite{hessian} and non-convex shrinkage penalty \cite{shrinkage}. In this work, we use the shrinkage penalty on the singular values of the hessian. Although non-convex regularizations are designed to better approximate the $l_0$ norm, non-convexity has many drawbacks such as convergence issues and no recovery guarantees. In addition to being non-convex, the optimization problem for the non-convex formulation of  HS is also non-smooth. This further adds to the complexity of the problem in terms of optimization. Now, the above problems are solved by the following contributions in this work:
\begin{enumerate}
	\item Design of a non-convex regularization retaining  the theoretical and structural properties of the original HS-norm,
	\item algorithm to solve the image restoration problem with the proposed non-convex functional,
	\item convergence results for the algorithm, and 
	\item establshing various theoretical properties of the restoration cost with the proposed regularization.
	\subsection{Organization of the paper} In \cref{ch2:sec:formulation}, we give describe the proposed non-convex regularization. In \cref{ch2:sec:restoration}, we give details of the image restoration problem and show the numerical results for the image restoration problem in \cref{ch2:sec:simulations}. Finally, all the theoretical results and proofs are given in \cref{ch2:sec:theory}.
\end{enumerate} 
\section{Formulation}\label{ch2:sec:formulation}
\subsection{Forward model}The degradation model for a linear imaging inverse problem is expressed as follows:

\begin{equation}
\mathbf{m} = \mathcal{T}(\mathbf{u}) + \mathbf{\eta},
\end{equation}

In this work, our approach involves considering images in a (lexicographically) scanned form, departing from the conventional 2-D array perspective. Therefore, the measurement image takes on a vector representation: $\mathbf{m} \in \mathbb{C}^{N}$, while $\mathbf{u} \in \mathbb{R}^{N}$ signifies the  original image specimen. Additionally, the operator $\mathcal{T}$ is a linear operator representing the forward model.

This paper focuses on MRI image reconstruction. In MRI reconstruction, the forward model $\mathcal{T}$ can be understood as the composition of two operators: $\mathcal{T} = \mathcal{M} \circ \mathcal{F}$. The operator $\mathcal{M}$ corresponds to the sampling trajectory and can be represented through multiplication by a diagonal matrix, which embodies a 2D mask consisting of 1s (where sampling occurs) and zeros (where sampling is absent). On the other hand, $\mathcal{F}$ symbolizes the 2D Discrete Fourier Transform. Additionally, $\mathbf{\eta}\in \mathbb{C}^{N}$ denotes the Gaussian measurement noise.

It is important to note that when referring to a pixel in the square image $\mathbf{u}$, containing $N$ pixels, at the coordinates $[r_1,r_2]$, the notation $[\mathbf{u}]_{\mathbf{r}}$ is used, rather than $u_{r_1\sqrt{N}+r_2}$. This choice of notation is made to enhance clarity and conciseness when indicating access to the pixel positioned at coordinate $\mathbf{r}$. Also, $\sum_{\bn r}$ denotes the summation over all pixel locations.

\subsection{Hessian-Schatten norm regularization}
\label{sec:format}

The $q$-Hessian-Schatten-norm \cite{hessian} ($\mathcal{HS}_q(\cdot)$) at any pixel location of an image is defined as the $l_q$ norm of the singular values of the image Hessian. The corresponding $q$-Hessian-Schatten-norm (HS) regularization functional for an image is obtained as the sum of these norm values across all pixel locations. Let $\bn D_{xx},\ \bn D_{xy},\ \bn D_{yx} \,\text{and }\bn D_{yy.}$ denote the discrete second derivative operators (i.e, discrete analogue of second-order partial derivative $\partial(\cdot)/\partial x\partial y$ etc.), then the discrete Hessian , $\mathcal H:\mathbb R^N\rightarrow \mathbb R^{ 2 \times 2 \times N}$ can be defined as:
$$[\mathcal H(\mathbf u)]_{\mathbf r}=\begin{pmatrix}
[\bn D_{xx}(\mathbf u)]_{\mathbf r} & [\bn D_{xy}(\mathbf u)]_{\mathbf r} \\
[\bn D_{yx}(\mathbf u)]_{\mathbf r} & [\bn D_{yy}(\mathbf u)]_{\mathbf r} 
\end{pmatrix}\in \mathbb R^{2\times 2},$$
for all $N$ pixel locations indexed by $\r$. Now, let $\sigma_1([\mathcal H(\mathbf u)]_{\mathbf r})$ and $\sigma_2([\mathcal H(\mathbf u)]_{\mathbf r})$ denote the singular values of $[\mathcal H(\mathbf u)]_{\mathbf r}$. With this, the Hessian-Schatten-norm can be defined as:

$$\mathcal{HS}_q(\mathbf u)=\sum_{\bn r}\big[|\sigma_1([\mathcal H(\mathbf u)]_{\mathbf r})|^q+ |\sigma_2([\mathcal H(\mathbf u)]_{\mathbf r})|^q\big]^{1/q},$$ 
where $ q$ is considered to lie in $[1,\infty].$ This is because the above choice of $q$ makes $\mathcal{HS}_q(\cdot)$ convex, and therefore efficient convex optimization  algorithms (e.g. ADMM\cite{hsn_poisson}, primal-dual splitting \cite{hessian} etc.) can be used to obtain the reconstruction. The original work \cite{hessian} also proposed solutions using proximal operators for $q\in \{1,2,\infty\}$. Although the convexity of the HS-norm described above is an advantage with respect to optimization and convergence, it has been verified theoretically as well as by numerical experiments that non-convex regularization functionals lead to a better quality of the recovered image.   This motivates the extension of the HS penalty to non-convex formulation, which can lead to better reconstruction.  In this following section, we describe the  non-convex  HS (based) regularization obtained by applying the shrinkage penalty \cite{shrinkage} on the singular values of the Hessian. 
\subsection{Shrinkage Penalty}\label{ch2:sec:qshs}
The shrinkage penalty, as discussed in \cite{shrinkage}, possesses theoretical properties—such as the exact recovery of sparse vectors and the convergence of proximal algorithms—that resemble those of the conventional \(l_1\) penalty, despite its non-convex nature.

The shrinkage penalty (\(g_q(\cdot)\)) is not explicitly defined; rather, it is characterized by its proximal operation. The proximal operation (\(s_q\)) is the solution to the following cost for \(t\):
\begin{align}\label{sq}
\gamma(x,t) =  \rho g_q(t) + \frac{1}{2}(t-x)^2.
\end{align}
The function \(s_q\)  is given by:
\[
s_q(x) = \argmin_t \gamma(x,t) = \max\left\{|x| - \rho^{2-q}|x|^{1-q}, 0\right\} \cdot \text{sign}(x).
\]
This expression of \(s_q(\cdot)\) is known as the \(q\)-shrinkage operation. Notably, when \(q=1\), \(s_q(\cdot)\) corresponds to the familiar soft-thresholding operation (which is the solution to \cref{sq} with \(g_q(t)\) replaced by \(|t|\)). For a given proximal mapping ($s$), the corresponding cost ($g$) will exist given the conditions  outlined in \cref{lit1} are satisfied. 

\begin{littheorem}\cite{shrinkage} \label{lit1}Consider a continuous function $s:[0,\infty)\rightarrow \mathbb R$ and satisfies $$s(x)=\begin{cases}0 \ \ x\le \lambda  \\
	\text{strictly increasing} \ \ \ x\ge\lambda,
	
\end{cases}$$ also $s(x)\le x.$ With this $s$, define $S:\mathbb R \rightarrow \mathbb R$ such that  $S: x \mapsto s(x)sign(x)$, then $S(\cdot)$ is a proximal mapping of an even, continuous and strictly increasing function $g.$ Moreover, $g(\cdot)$ is differentiable in $(0,\infty)$, and $g(\cdot) $ is non-differrentiable at $0$ if and only if $\lambda >0$ with $\partial g(0)=[-1,1]$.

\end{littheorem}

It can be observed that these conditions are satisfied by the \(s_q\) defined above.
The function ($g_q(\cdot)$) derived from the shrinkage function    has some interesting properties:
\begin{itemize}
\item $g_q(\cdot)$ is coercive for $q\in (0,1)$

\item $g_q''(x)<0$ for all $x\in (0,\infty)$, this means that $g_q':(0,\infty)\rightarrow (0,\infty)$ is invertible and $(g_q')^{-1}:(0,\infty)\rightarrow (0,\infty)$ is well defined. 
\end{itemize} 
These properties will be used in showing the existence of the solution for the regularized image reconstruction, and the restricted proximal regularity of the cost.

\subsection{QSHS: q-Shrinkage Hessian-Schatten penalty }
With this $g_q$, we can define the shrinkage-Schatten penalty ($f(\cdot)$) on the singular values of the image Hessian $\mathcal{H}(\mathbf{u})$ at pixel location $\mathbf r$ as:
\begin{align}\label{shrink_singular}f([\mathcal H\mathbf u]_{\r})=g_q(\sigma_1([\mathcal H\mathbf u]_{\mathbf r}))+g_q(\sigma_2([\mathcal H\mathbf u]_{\mathbf r})).\end{align}
Without the closed form solution for $g_q$, we can still use $g_q$ for image restoration as we can solve the following optimization problem (which is a step  in the ADMM algorithm described in \cref{step}) in terms of the shrinkage operation  $ s_q(\cdot).$

$$ H^*=\argmin_{  H} \frac{1}{2}\| M- H\|_2^2+\rho f( H),  M\in \mathbb R^{2\times 2} .$$ 
To solve the above problem, we define $  M=  U  S  V^T$ and $  H=  U_1  S_1  V_1^T $ to be the singular value decompositions of $  M$ and $  H$ respectively. Following the approach by \cite{hessian}, we apply Von Neumann's trace inequality to obtain: $\|  M-  H\|_2\ge \|  S-  S_1\|_2$. Now, using the result obtained above, we get:
$\rho f(  H)+\frac{1}{2}\|  M-  H\|_2^2\ge  0.5(\sigma_2(  H)-\sigma_2(  M))^2+0.5(\sigma_1(  H)-\sigma_1(  M))^2+\rho g_q(\sigma_1(  H))+ \rho g_q(\sigma_2(  H))$. Since, the problem is separable we can obtain $$\sigma_i^*=\argmin_{\sigma}\frac{1}{2}(\sigma -\sigma_i(  M))^2+\rho g_q(\sigma)=s_q(\sigma_i(  M)),$$ for $i=1,2$. Therefore,   
\begin{align}\label{shrink}
{H^{*}}=  U \begin{pmatrix}
s_q(\sigma_1(  M)) & 0\\
0& s_q(\sigma_2(  M)) \end{pmatrix}   V^T.
\end{align}
Here, the proposed $ {H^{*}}$ will have a sparser set of singular values, when compared to the original $ {H}$. By this formulation it is clear that  the proposed non-convex functional will retain the properties of the original HS formulation and lead to sharper results.

The above defined $f(\cdot)$ has many propoerties similar to HS norm:
\begin{itemize}
\item We prove that the QSHS penalty satisfies a technical condition of the so-called restricted proximal regularity (please refer to \cref{def:resp} for details), which generalizes the concept of convexity.  This condition helps us to show the convergence of the proposed algorithm. We prove this result in  \cref{ch2:sec:theory}.
\item The (continuous analogue of) QSHS penalty is translational and rotational invariant. We state the result rigorously in the form of the following proposition. 
\begin{restatable}{proposition}{rotationinvariant}\label{ch2:thm:rotation-invariant}
	Let $u:\mathbb R^2\rightarrow \mathbb R$ be a twice continuously differentiable function and let $\mathcal H$ denote the hessian operator, $f(u) \df \int_{\mathbf r}g_q(\sigma_1(\mathcal Hu(\mathbf r)))+g_q(\sigma_2(\mathcal Hu(\mathbf r)))d\mathbf r$, then $f(u)=f(u\circ R_{\theta})$ for any rotation matrix $R_{\theta}.$ 
\end{restatable}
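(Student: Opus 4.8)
The plan is to reduce the statement to two elementary facts: precomposing $u$ with a rotation conjugates its Hessian by that same rotation, and singular values are invariant under orthogonal multiplication. Writing $v \df u \circ R_\theta$, so that $v(\mathbf r) = u(R_\theta \mathbf r)$, I would first compute the Hessian of $v$ by the chain rule. Since $R_\theta$ is linear, differentiating twice and tracking the indices gives
\[
\mathcal H v(\mathbf r) = R_\theta^{T}\,(\mathcal H u)(R_\theta \mathbf r)\,R_\theta .
\]
The placement of $R_\theta^{T}$ on the left and $R_\theta$ on the right is exactly what a second-order derivative produces under a linear change of variables, and this identity is the one computation that must be carried out carefully.

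Next I would use the invariance of singular values under multiplication by orthogonal matrices. Setting $H \df (\mathcal H u)(R_\theta \mathbf r)$ and taking a singular value decomposition $H = W \Sigma Z^{T}$, I obtain $R_\theta^{T} H R_\theta = (R_\theta^{T} W)\,\Sigma\,(R_\theta^{T} Z)^{T}$; since $R_\theta^{T} W$ and $R_\theta^{T} Z$ are again orthogonal, this is itself a valid singular value decomposition of $\mathcal H v(\mathbf r)$ with the same diagonal $\Sigma$. Hence $\sigma_i(\mathcal H v(\mathbf r)) = \sigma_i\big((\mathcal H u)(R_\theta \mathbf r)\big)$ for $i = 1,2$, so the integrand for $v$ at $\mathbf r$ coincides with the integrand for $u$ evaluated at the rotated point $R_\theta \mathbf r$:
\[
g_q(\sigma_1(\mathcal H v(\mathbf r))) + g_q(\sigma_2(\mathcal H v(\mathbf r))) = g_q(\sigma_1((\mathcal H u)(R_\theta \mathbf r))) + g_q(\sigma_2((\mathcal H u)(R_\theta \mathbf r))).
\]

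Finally I would integrate both sides over $\mathbf r$ and apply the substitution $\mathbf s = R_\theta \mathbf r$. Because $R_\theta$ is a rotation we have $|\det R_\theta| = 1$, so $d\mathbf s = d\mathbf r$ and the domain $\mathbb R^2$ is preserved; the resulting integral is precisely $f(u)$, giving $f(u \circ R_\theta) = f(u)$. I do not expect any serious obstacle: the argument rests only on the Hessian transformation rule, orthogonal invariance of singular values, and the unit Jacobian of a rotation, and the single point demanding attention is getting the conjugation $R_\theta^{T}(\cdot)R_\theta$ right. Notably, the fact that $g_q$ is specified only implicitly through its proximal map plays no role here, since the argument treats $g_q$ as an arbitrary fixed scalar function applied to each singular value.
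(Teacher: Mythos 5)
Your proof is correct and is precisely the argument the paper has in mind: the paper itself omits the proof and defers to \cite{hessian}, whose rotation-invariance argument for the Hessian--Schatten norm consists of exactly your three steps --- the chain-rule identity $\mathcal H v(\mathbf r) = R_\theta^{T}(\mathcal H u)(R_\theta \mathbf r) R_\theta$, invariance of singular values under orthogonal factors, and the unit-Jacobian change of variables. Your closing observation, that the argument uses nothing about $g_q$ beyond it being a fixed scalar function of the singular values, is also exactly the paper's stated justification for why the result of \cite{hessian} extends directly to the QSHS penalty.
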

The proof of the above proposition is similar to the one presented in \cite{hessian}. The result given in \cite{hessian} can be directly extended to our formulation as the proposed penalty is based on the singular values of the hessian. Therefore, we skip the proof.

\end{itemize}

\section{Image restoration problem}\label{ch2:sec:restoration}
The recovered image ($\mathbf u^*$) can be obtained by solving the following optimization problem:
\begin{align}\label{main_opt}
\mathbf u^{*}=\argmin_{u \in S}\frac{1}{2}\|\mathcal T \mathbf u-\mathbf m\|_2^2+\rho \sum_{\mathbf r}f([\mathcal H\mathbf u]_{\mathbf r}).
\end{align}

Here, $f$ is the shrinkage penalty defined in $\cref{shrink_singular}$ and $S$ is the set where desired solution lies. For example, one widely used choice for $S$ is the positive orthant, i.e $\{\mathbf u| [\mathbf u]_{\mathbf r}\ge 0 \ \ \forall \mathbf r\}.$ We prove the following lemma that guarantees the existence of the solution of the optimization problem given in  \cref{main_opt}.
\begin{restatable}{lemma}{coercive}\label{ch2:lemma:restoration-coercive}
If $\mathcal N(\mathcal T)\cap \mathcal N(\mathcal  H)=\{\mathbf 0\}$,	the image restoration cost $f(\mathbf u)=\frac{1}{2}\|\mathcal T\mathbf u-\mathbf m\|^2+\rho \sum_{\mathbf r}g_q(\sigma_1([\mathcal H \mathbf u]_{\mathbf r}))+g_q(\sigma_2([\mathcal H \mathbf u]_{\mathbf r}))$ is coercive.
\end{restatable}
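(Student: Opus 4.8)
The plan is to prove that the cost blows up, i.e. $f(\mathbf u)\to\infty$ whenever $\|\mathbf u\|\to\infty$, by tying the two summands to the two linear maps $\mathcal T$ and $\mathcal H$, whose kernels meet only at the origin. The first step is the elementary identity $\|[\mathcal H\mathbf u]_{\mathbf r}\|_F^2=\sigma_1^2([\mathcal H\mathbf u]_{\mathbf r})+\sigma_2^2([\mathcal H\mathbf u]_{\mathbf r})$, summed over pixels, which gives $\sum_{\mathbf r}\big(\sigma_1^2+\sigma_2^2\big)=\|\mathcal H\mathbf u\|_F^2$, the squared Frobenius norm of the whole Hessian field. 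This converts the nonlinear singular-value quantities appearing in the penalty into a genuine seminorm of the \emph{linear} operator $\mathcal H$, which is what lets the null-space hypothesis enter.

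Second, I would use the assumption $\mathcal N(\mathcal T)\cap\mathcal N(\mathcal H)=\{\mathbf 0\}$. The stacked linear map $\mathbf u\mapsto(\mathcal T\mathbf u,\mathcal H\mathbf u)$ is then injective on $\mathbb R^N$, so $\mathbf u\mapsto\big(\|\mathcal T\mathbf u\|_2^2+\|\mathcal H\mathbf u\|_F^2\big)^{1/2}$ is a norm on $\mathbb R^N$. By equivalence of norms in finite dimensions there is a constant $c>0$ with
\[
\|\mathcal T\mathbf u\|_2^2+\|\mathcal H\mathbf u\|_F^2\ \ge\ c\,\|\mathbf u\|_2^2
\]
for all $\mathbf u$ (equivalently, $\mathcal T^{*}\mathcal T+\mathcal H^{*}\mathcal H$ is positive definite on the real space, with smallest eigenvalue $c$). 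Hence $\|\mathbf u\|\to\infty$ forces at least one of $\|\mathcal T\mathbf u\|_2$ or $\|\mathcal H\mathbf u\|_F$ to diverge.

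Third, I would show that in either case $f\to\infty$, using that both summands are bounded below. For the data term the quadratic lower bound $\tfrac12\|\mathcal T\mathbf u-\mathbf m\|^2\ge\tfrac14\|\mathcal T\mathbf u\|_2^2-\tfrac12\|\mathbf m\|_2^2$ shows it diverges whenever $\|\mathcal T\mathbf u\|_2\to\infty$. For the regularizer I invoke the two stated properties of $g_q$ for $q\in(0,1)$ together with the Literature Theorem: $g_q$ is even, continuous, strictly increasing on $[0,\infty)$ (so bounded below by $g_q(0)$ and invertible there), and coercive. If $\|\mathcal H\mathbf u\|_F\to\infty$, then since there are only finitely many ($2N$) singular values, $\sum_{\mathbf r}(\sigma_1^2+\sigma_2^2)\to\infty$ forces the largest singular value to diverge; monotonicity and coercivity of $g_q$ then send the corresponding penalty term to $\infty$ while every other term stays $\ge g_q(0)$, so $\rho\sum_{\mathbf r}[g_q(\sigma_1)+g_q(\sigma_2)]\to\infty$. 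Combining the two cases yields coercivity of $f$.

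The Frobenius/trace identity and the data-term bound are routine. The step to argue with care is the third one: the penalty is only coercive, not quadratically lower bounded, and is defined implicitly through $s_q$ rather than in closed form, so the divergence of $\sum g_q(\sigma_i)$ cannot be read off by a direct norm comparison with $\|\mathcal H\mathbf u\|_F^2$. Instead it must be extracted from the divergence of $\sum\sigma_i^2$ using the finiteness of the pixel count and the monotonicity of $g_q$ (equivalently, noting that $(\sigma_1,\dots,\sigma_{2N})\mapsto\sum g_q(\sigma_i)$ is a coercive function on $\mathbb R^{2N}$ whose argument has Euclidean norm $\|\mathcal H\mathbf u\|_F$). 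I expect this implicit-penalty-to-Frobenius-norm link to be the main obstacle.
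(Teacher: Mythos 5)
Your proposal is correct, but it takes a genuinely different route from the paper's own proof. The paper argues via level sets: on $\mathcal L_\eta(f)$ the coercivity of $g_q$ bounds every singular value by some $M$, and then --- the key step --- the mean value theorem together with concavity ($g_q$ differentiable on $(0,\infty)$ with $g_q'$ decreasing and $g_q(0)=0$) yields the linear lower bound $g_q(\sigma)\ge g_q'(2M)\,\sigma$ on the level set, so the penalty dominates a multiple of the nuclear norm $\|\mathcal H\mathbf u\|_*$; combined with the compactness claim $\|\mathcal T\mathbf u\|+\|\mathcal H\mathbf u\|_*\ge\gamma\|\mathbf u\|$ (Claim~\ref{ch2:claim:lb}, which is where the null-space hypothesis enters), every level set is bounded and coercivity follows. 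You instead prove divergence directly: your stacked-operator bound $\|\mathcal T\mathbf u\|_2^2+\|\mathcal H\mathbf u\|_F^2\ge c\|\mathbf u\|_2^2$ carries the same compactness content as the paper's claim (Frobenius rather than nuclear norm --- equivalent in finite dimensions), but you bypass the linearization of $g_q$ entirely, extracting divergence of the penalty from $\max_i\sigma_i\ge\|\mathcal H\mathbf u\|_F/\sqrt{2N}\to\infty$ together with monotonicity and coercivity of $g_q$. Your route uses strictly weaker properties of the implicit penalty (increasing, coercive, bounded below --- no differentiability or concavity needed), so it is more elementary and slightly more general; the paper's route buys an explicit quantitative radius for each level set, which feeds directly into its Weierstrass existence argument. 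One tightening for your write-up: along a sequence $\|\mathbf u_n\|\to\infty$ neither $\|\mathcal T\mathbf u_n\|_2$ nor $\|\mathcal H\mathbf u_n\|_F$ need diverge individually (the two can alternate), so the case split should be stated pointwise, e.g.
\begin{equation*}
f(\mathbf u)\ \ge\ \min\Bigl\{\tfrac14\,\tfrac{c}{2}\|\mathbf u\|^2-\tfrac12\|\mathbf m\|^2,\ \ \rho\,g_q\bigl(\sqrt{c/2}\,\|\mathbf u\|/\sqrt{2N}\bigr)+ C\Bigr\},
\end{equation*}
both branches being coercive in $\|\mathbf u\|$ --- this is exactly what your observation that both summands are bounded below delivers, so it is a matter of phrasing rather than a genuine gap.
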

Also, $f(\cdot)$ is continuous, therefore, existence of the minimum point is guaranteed by the Weierstrass theorem. For the complete proof, please refer to \cref{ch2:sec:theory:restoration}.

We solve the optimization by ADMM approach. Although ADMM is (conventionally) guaranteed to converge for convex functions, but there are recent works (e.g. \cite{wang2019global}) that demonstrate the effectiveness of ADMM  for non-convex problems. 
\label{sec:pagestyle}
In order to derive the ADMM algorithm, we first write a constrained formulation of \cref{main_opt}. It can be verified that \cref{main_opt} is equivalent to the following constrained problem ($I_S$ denotes the indicator function on set $S$):\begin{align} \nonumber \mathbf u^{*}&=\argmin_{u \in S}\frac{1}{2}\|\mathcal T \mathbf u-\mathbf m\|_2^2+\rho\sum_{\mathbf r}f([\mathbf H]_{\mathbf r})+I_S(\mathbf v),\\& \text{ subject to } [\mathcal H\mathbf u]_{\mathbf r}=[\mathbf H]_{\mathbf r} \ \forall \mathbf r \text{ and } \mathbf u=\mathbf v.
\end{align}
Note that the constrained formulation decouples the two terms in \cref{main_opt}.  The ADMM involves minimization of the augmented Lagrangian ($\mathcal L()$) which is given as:
\begin{align}\label{ch2:eq:lagrangian}\mathcal L(&\mathbf u,\mathbf H,\mathbf v,\hat{\mathbf u}, \hat{\mathbf H})=\\&\nonumber 0.5 \ \|\mathcal T \mathbf u-\mathbf m\|_2^2+\rho\sum_{\mathbf r}f([\mathbf H]_{\mathbf r})+I_S(\mathbf v)\\&\nonumber +\frac{\beta}{2}\sum_{\mathbf r}\|[\mathcal H\mathbf u]_{\mathbf r}-[\mathbf H]_{\mathbf r}\|_F^2+\langle[\hat{\mathbf H}]_{\mathbf r},\|[\mathcal H\mathbf u]_{\mathbf r}-[\mathbf H]_{\mathbf r}\rangle\nonumber +\frac{\beta}{2}\|\mathbf u-\mathbf v\|_2^2+\langle\hat{\mathbf u},\mathbf u-\mathbf v\rangle.\end{align}
The ADMM algorithm is composed of minimization of $\mathcal L()$  w.r.t $\mathbf u,\mathbf H$ and $\mathbf v$ cyclically, and then updating Lagrange multipliers $\hat{\mathbf H}$ and  $\hat{\mathbf u}.$

For any iteration $k \in \mathbb{N}$, the algorithm advances through the following four steps:

\textbf{Step 1, minimization w.r.t $\mathbf{v}$:}
$\mathbf v$ is updated as: $ \mathbf v^{(k+1)}=\argmin_{\mathbf v}\mathcal L(\mathbf u^{(k)},\mathbf H^{(k)},\mathbf v,\hat{\mathbf u}^{(k)}, \hat{\mathbf H}^{(k)}).$ This reduces to the following minimization on completing the squares:\begin{align}
\mathbf v^{(k+1)}&\nonumber =\argmin_{\mathbf v}I_S(\mathbf v) +\frac{\beta}{2} \ \|\mathbf u^{(k)}-\mathbf v+\frac{\hat{\mathbf u}^{(k)}}{\beta}\|_2^2\\&=P_S(\mathbf u^{(k)} +\frac{\hat{\mathbf u}^{(k)}}{\beta})
\end{align}
Here, $P_S(\cdot)$ is the projection on set $S$.\\
\textbf{Step 2, minimization w.r.t $\mathbf{H}$:}\label{step}
In step 2, $\mathbf H$ is updated as: $$ \mathbf H^{(k+1)}=\argmin_{\mathbf H\in \mathbb R^{2\times 2 \times N}}\mathcal L(\mathbf u^{(k)},\mathbf H,\mathbf v^{(k+1)},\hat{\mathbf u}^{(k)}, \hat{\mathbf H}^{(k)}).$$
Since, the above minimization is separable for each pixel location $\mathbf r,$ we solve the minimization for a fixed $\mathbf r.$ This minimization reduces to
$$\argmin_{[\mathbf H]_{\mathbf r}\in \mathbb R^{2\times 2}}\frac{\beta}{2}\|[\mathcal H\mathbf u^{(k)}]_{\mathbf r}-[\mathbf H]_{\mathbf r}+\frac{[\hat{\mathbf H}^{(k)}]_{\mathbf r}}{\beta}\|_F^2+\rho f([\mathbf H]_{\mathbf r}).$$
The solution to the above problem has already been done in the previous section (\cref{shrink}), where $ M$ plays the role of $[\mathcal H\mathbf u^{(k)}]_{\mathbf r}+[\hat{\mathbf H}^{(k)}]_{\mathbf r}.$
\\
\textbf{Step 3, minimizing w.r.t $\mathbf u$:} Updating $\mathbf u$ is essentially minimizing $  \frac{1}{2} \ \|\mathcal T \mathbf u-\mathbf m\|_2^2 +\frac{\beta}{2}\sum_{\mathbf r}\|[\mathcal H\mathbf u]_{\mathbf r}-[\mathbf H^{(k+1)}]_{\mathbf r}+\frac{[\hat{\mathbf H}^{(k)}]_{\mathbf r}}{\beta}\|_F^2\nonumber +\frac{\beta}{2}\|\mathbf u-\mathbf v^{(k+1)}+\frac{\hat{\mathbf u}^{(k)}}{\beta}\|_2^2$ w.r.t $\mathbf u.$ The minimizer of the above cost can be written as:
\begin{align}\big[\mathcal T^*\mathcal T+\beta& (\bn D_{xx}^T\bn D_{xx}+\bn D_{xy}^T\bn D_{xy}+\bn D_{yx}^T \bn D_{yx}+\bn D_{yy}^T \bn D_{yy}+\mathcal I)\big]\bn u^{(k+1)}= \\&\mathcal T^*\bn m+\beta \bn v^{(k+1)}-\bn u^{(k)}+\beta(\bn D_{xx}^T\bar{\bn H}_{11}+\bn D_{xy}^T \bar{\bn H}_{12}+\bn D_{yx}^T \bar{\bn H}_{21}+\bn D_{yy}^T\bar{\bn H}_{22} ).\end{align}
Here, $\left [\begin{array}{cc}\bar{\bn H }_{11}&\bar{\bn H }_{12}\\ \bar{\bn H }_{21}&\bar{\bn H }_{22} \end{array}  \right ]=\bn H^{(k+1)}-\hat{\bn H}^{(k)}/\beta.$The above problem is linear, and as all the operators involved are block circulant, the equation can be solved efficiently by using 2D-FFTs.

\textbf{Step 4, updating multipliers:}
After the above steps multipliers can be updated as:

\begin{itemize}
\item $\hat{\mathbf u}^{(k+1)}=\hat{\mathbf u}^{(k)}+\beta(\mathbf u^{(k+1)}-\mathbf v^{(k+1)})$, and
\item $\forall \mathbf r: [\hat{\mathbf H}^{(k+1)}]_{\mathbf r}=[\hat{\mathbf H}^{(k)}]_{\mathbf r}+\beta[\mathcal H\mathbf u^{(k+1)}]_{\mathbf r}-\beta[\mathbf H^{(k+1)}]_{\mathbf r}.$
\end{itemize}
\subsection{Convergence Guarantees}
ADMM was proposed in \cite{glowinski1975approximation,gabay1976dual}. ADMM typically  converges  for convex problems \cite{davis2016convergence}, but can fail to converge for multi-block (3 or more) splitting. The behaviour of ADMM for non-convex and non-smooth problems was largely unknown and many questions are still unanswered. But, owing to successful results of the algorithm in many applications (especially in signal processing literature, see for e.g. \cite{chartrand2013nonconvex,liavas2015parallel,wen2012alternating}) there has been a lot of interest in understanding the convergence of the ADMM for non-convex and non-smooth problems. There are many frameworks that establish the convergence of non-convex ADMM \cite{li2015global,hong2016convergence,wang2019global,magnusson2015convergence,wang2018convergence}. The works by \cite{magnusson2015convergence} and \cite{wang2018convergence}
need restrictive  assumptions on the iterates, which are difficult to verify. The \cite{li2015global} work requires that the hessian of the smooth part (data-fitting) of the cost  be lower-bounded, this is not true as $\mathcal T$ has a non-trivial null space for most of the imaging inverse problems.   \cite{hong2016convergence} prove the convergence for only a special class of optimization problems, and the framework is not general. \cite{wang2018convergence} provide the most general framework and allow us to prove that the algorithm is (subsequentially) convergent.  We prove the following theorem that guarantees that any sub-sequential limit of the sequence generated by the above algorithm is a stationary point of the image restoration cost.
\begin{restatable}{theorem}{convergence}\label{ch2:thm:convergence}If $\mathcal N(\mathcal T)\cap \mathcal N(\mathcal  H)=\{\mathbf 0\}$ and $\beta$ is sufficiently large the iterates generated algorithm defined in \cref{ch2:sec:restoration} by steps 1-4 are bounded. Moreover, each limit point of the sequence generated by the iterate is a stationary point of the image restoration cost $f(\mathbf u)=\frac{1}{2}\|\mathcal T\mathbf u-\mathbf m\|^2+\rho \sum_{\mathbf r}g_q(\sigma_1([\mathcal H \mathbf u]_{\mathbf r}))+g_q(\sigma_2([\mathcal H \mathbf u]_{\mathbf r}))$ (defined in \cref{main_opt}). 
\end{restatable}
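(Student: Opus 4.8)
The plan is to show that the four-step iteration of \cref{ch2:sec:restoration} is an instance of the nonconvex, nonsmooth ADMM covered by the framework of \cite{wang2018convergence}, and then to deduce both boundedness and subsequential stationarity from that framework's conclusion. First I would recast the constrained problem in the standard block form of that framework: the three primal blocks are $\mathbf{v}$, $\mathbf{H}$ and $\mathbf{u}$, the nonsmooth terms are $I_S(\mathbf{v})$ and $\rho\sum_{\mathbf{r}}f([\mathbf{H}]_{\mathbf{r}})$, the smooth coupling term is the data fidelity $\tfrac{1}{2}\|\mathcal{T}\mathbf{u}-\mathbf{m}\|_2^2$, and $\mathbf{u}$ is designated as the distinguished ``last'' block that couples to the others through the constraints $\mathcal{H}\mathbf{u}=\mathbf{H}$ and $\mathbf{u}=\mathbf{v}$. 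The key structural point is that the constraint $\mathbf{u}=\mathbf{v}$ contributes an identity operator, so the image of the data term's gradient is trivially contained in the range of the constraint operator acting on $\mathbf{u}$; this is precisely the image/surjectivity condition the framework requires, and it is what allows the argument to survive the non-trivial null space of $\mathcal{T}$.

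With the block structure fixed, I would verify the hypotheses one at a time. The smooth coupling term has gradient $\mathcal{T}^*(\mathcal{T}\mathbf{u}-\mathbf{m})$, which is Lipschitz with constant $\|\mathcal{T}\|^2$, giving Lipschitz differentiability. Coercivity and lower-boundedness of the augmented Lagrangian $\mathcal{L}$ of \cref{ch2:eq:lagrangian} follow from \cref{ch2:lemma:restoration-coercive} under the standing assumption $\mathcal{N}(\mathcal{T})\cap\mathcal{N}(\mathcal{H})=\{\mathbf{0}\}$. For the nonsmooth regularizer I would invoke the restricted prox-regularity of $\rho\sum_{\mathbf{r}}f([\mathbf{H}]_{\mathbf{r}})$ established earlier (\cref{def:resp}); this is the substitute for convexity that the framework permits on the nonsmooth blocks.

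Given these ingredients, the convergence proof follows the usual three-inequality route. The plan is: (i) sufficient decrease, namely that for $\beta$ exceeding a threshold determined by $\|\mathcal{T}\|^2$ and the prox-regularity modulus, each full sweep lowers $\mathcal{L}$ by at least a positive multiple of $\|w^{(k+1)}-w^{(k)}\|^2$, where $w$ stacks all primal and dual variables; the $\mathbf{H}$-update contributes the term controlled by restricted prox-regularity, while the $\mathbf{u}$-update is governed by the Lipschitz constant and the large-$\beta$ strong convexity of the $\mathbf{u}$-subproblem; (ii) combining this with the lower bound to conclude that $\mathcal{L}^{(k)}$ converges and $\|w^{(k+1)}-w^{(k)}\|\to 0$; and (iii) a subgradient bound, exhibiting an element of $\partial\mathcal{L}(w^{(k+1)})$ whose norm is $O(\|w^{(k+1)}-w^{(k)}\|)$ from the optimality conditions of each subproblem and the multiplier updates. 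Boundedness of the primal iterates then comes from coercivity of $\mathcal{L}$, and boundedness of the dual iterates from the image condition; passing to a convergent subsequence and using closedness of the limiting subdifferential together with restricted prox-regularity identifies any limit point as a stationary point of the restoration cost.

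The hard part will be the interaction between non-smoothness, non-convexity, and the singular-value map. The penalty $f$ is $g_q$ applied to the singular values of $[\mathbf{H}]_{\mathbf{r}}$, a map that is only directionally differentiable and fails to be smooth exactly where the two singular values coincide. Consequently the sufficient-decrease and subgradient-bound estimates for the $\mathbf{H}$-block cannot appeal to convexity or smoothness; they must be derived from restricted prox-regularity, which supplies the required one-sided inequality only on the ``regular'' region away from this degenerate locus. The delicate technical core is therefore to confirm that the $\mathbf{H}$-updates produced by the shrinkage operation $s_q$ of \cref{shrink} stay compatible with this restricted region and that the associated subgradients vary in the controlled fashion the framework demands.
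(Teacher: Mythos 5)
Your proposal is correct and takes essentially the same route as the paper: the paper verifies exactly the hypotheses you list --- coercivity via \cref{ch2:lemma:restoration-coercive}, restricted proximal regularity via \cref{ch2:thm:rpr}, Lipschitz smoothness of the quadratic data term, and the image/full-rank conditions made trivial by stacking $(\mathbf H,\mathbf v)$ against the operator $\mathcal A:\mathbf u\mapsto(\mathcal H\mathbf u,\mathbf u)$ with $\mathbf B=-\mathcal I$ --- and then invokes the nonconvex ADMM framework (\cref{ch2:th:wang}) as a black box instead of re-deriving the sufficient-decrease/subgradient-bound machinery you outline. The only point you gloss that the paper writes out explicitly is the closing argument that a stationary point $(\mathbf u^*,\mathbf v^*,\mathbf w^*)$ of the augmented Lagrangian yields $\mathbf 0\in\partial f(\mathbf u^*)$ for the original restoration cost, obtained by combining the constraint $\mathcal A\mathbf u^*=\mathbf v^*$ with the two optimality inclusions.
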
The theorem provides the following assurance: when the sequence produced by the algorithm converges, its convergence will occur at the point where the sub-gradient of the restoration cost reaches zero. Given that the sequence is bounded, the existence of a converging subsequence is guaranteed. Consequently, the limit of this subsequence will correspond to the point where the sub-gradient of the cost becomes zero.
For the proof of the above theorem, please see \cref{ch2:sec:theory:restoration}.

\section{Simulation Results}\label{ch2:sec:simulations}
To demonstrate the effectiveness of the proposed method, we compare the reconstruction results with q-Hessian Schatten norm \cite{hessian} (for $q$=1 and 2) and TV-1 \cite{rudintv}. Hessian Schatten norm for $q=2$ is popularly known as TV-2. We use two sampling masks ($\mathcal M$) with sampling densities 18 and 9 percent. We add noise with $\sigma=2.5.$ In numerical simulations, we use a data-set with 5 typical MRI-images (see \cref{test}) of size $256\times 256.$ For the proposed shrinkage penalty we use $q=.5$ (in \cref{sq}).  The optimal regularization parameter was tuned (by golden-section method) to obtain minimum Mean Squared Error (MSE). The SSIM scores of the reconstructions are given in the \cref{table}. The table clearly shows that the proposed method performs better than all other methods by a significant margin. To demonstrate the visual difference between the images, we show result of image 2  for mask-2 (shown in \cref{result} and zoomed view in \cref{zoom}). Clearly, the proposed method recovers sharper images as dot like structures are much sharper in the proposed method. Also, from the algorithm it is clear that there is no significant computational cost associated with the $q-$ shrinkage step. % Please add the following required packages to your document preamble:
\begin{table}[]
\begin{tabular}{|l|l|l|l|l|l|}
	\hline
	Im                 & Mask & TV-1  & TV-2  & HS-1  & Proposed \\ \hline
	\multirow{2}{*}{1} & 1    & 0.915 & 0.920 & 0.925 & 0.958    \\ \cline{2-6} 
	& 2    & 0.790 & 0.804 & 0.816 & 0.884    \\ \hline
	\multirow{2}{*}{2} & 1    & 0.960 & 0.964 & 0.966 & 0.980    \\ \cline{2-6} 
	& 2    & 0.893 & 0.903 & 0.905 & 0.953    \\ \hline
	\multirow{2}{*}{3} & 1    & 0.937 & 0.937 & 0.940 & 0.957    \\ \cline{2-6} 
	& 2    & 0.815 & 0.815 & 0.820 & 0.849    \\ \hline
	\multirow{2}{*}{4} & 1    & 0.936 & 0.938 & 0.941 & 0.966    \\ \cline{2-6} 
	& 2    & 0.866 & 0.871 & 0.875 & 0.920    \\ \hline
	\multirow{2}{*}{5} & 1    & 0.924 & 0.930 & 0.932 & 0.949    \\ \cline{2-6} 
	& 2    & 0.816 & 0.817 & 0.825 & 0.859    \\ \hline
\end{tabular}
\caption{Table showing SSIM values of reconstructions}\label{table}
\end{table}\begin{figure}[htb]
\includegraphics[width=8.5cm]{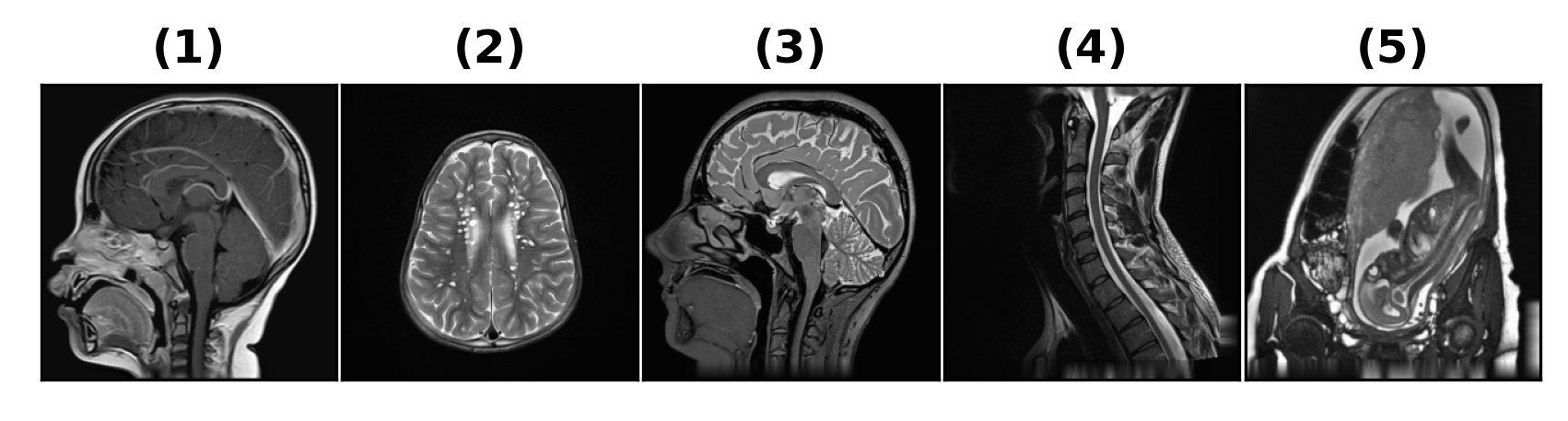}
\caption{Test Images}
\label{test}
\end{figure}
\begin{figure}[htb]
\includegraphics[width=5.5cm]{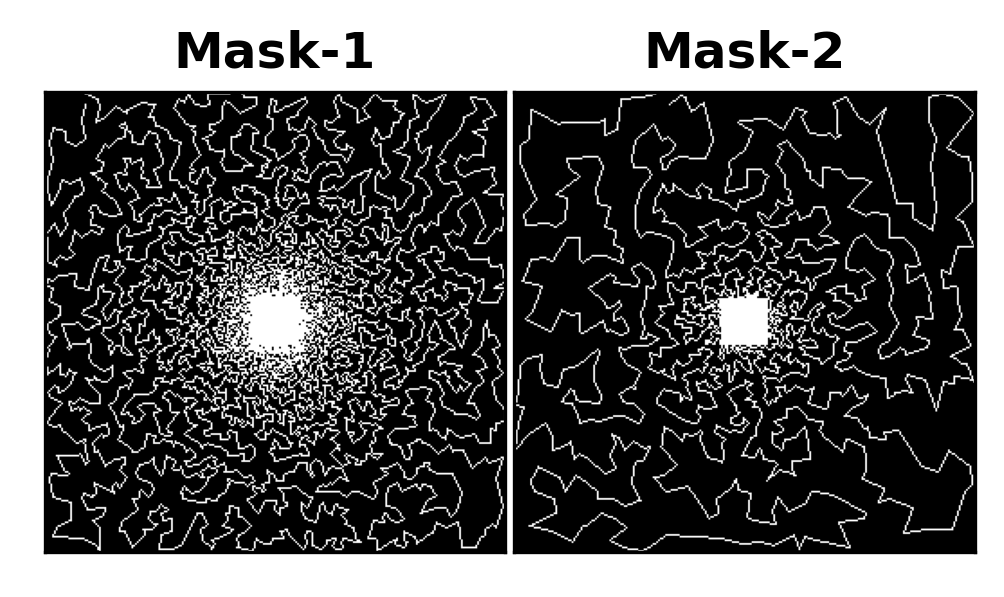}
\caption{Masks for sampling trajectories}
\label{mask}
\end{figure}
\begin{figure}[htb]
\includegraphics[width=8.5cm]{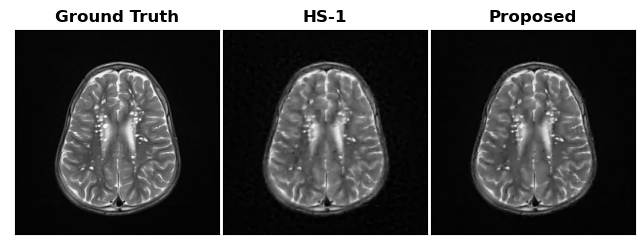}
\caption{Result for Im2 and Mask 2}
\label{result}
\end{figure}
\begin{figure}[htb]
\includegraphics[width=\textwidth]{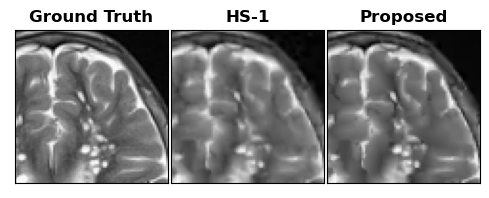}
\caption{Zoomed  view of the result}
\label{zoom}
\end{figure}

\section{Theoretical Results and Proofs}\label{ch2:sec:theory}\subsection{Properties of QSHS penalty
}\label{ch2:sec:theory:qshs}

Now, we formally define the concept of restricted proximal regularity.
\begin{definition}(Restricted proximal regularity) \label{def:resp}A lower semi-continuous function $f:\mathbb R^n\rightarrow \mathbb R \cup \{\infty\}$ is restricted proximal regular if for any $M>0$ and any bounded set $\Omega$ there exists $\gamma\equiv \gamma(M,\Omega)$ such that the following holds for all $y\in \Omega,x\in  \Set{x\in \Omega}{\|p\| \le M \ \ \ \forall p \ \ \ \in \partial f(x) } $, and for all $d\in \partial  f(x)$:
$$f(y)-f(x)-\langle d,(y-x)\rangle\ge-\frac{\gamma}{2}\|y-x\|^2. $$

\end{definition}

%	\begin{littheorem}
%		Let $H\in \mathbb R^{2\times 2}$ and $H=[U \ U_1] \begin{bmatrix}
%		S &\mathbf {0}   \\
%		 \mathbf{0} &\mathbf {0}  \\
%	
%		\end{bmatrix}[V \ V_1]^T$ be the singular value decomposition (SVD) of $H$, $r(H)=\sum_{i=1}^2g_q(S_{ii}(\mathbf H)),$ then $UDV^T+U_1\Theta V_1^T\in \partial r(H),$ where   $D$ is diagonal matrix with entries $(D)_{ii}=g_q'(S_{ii}), \Theta$ is any arbitrary matrix.
%	\end{littheorem}
The following proposition establishes the restricted proximal regularity of the proposed QSHS penalty. The proof proceeds by following the methodology outlined in the proof of restricted proximal regularity for $l_q$ norms ($q \in (0,1)$) as presented in \cite{wang2019global}. However, a notable challenge in this context is the lack of a closed-form expression for the penalty. Consequently, we use the abstract properties of $g_q$ to prove the result.
\begin{restatable}{proposition}{rpr}\label{ch2:thm:rpr}
Consider any $\mathbf H\in \mathbb R^{2\times 2},$ then $r(\bn H)\df g_q(\sigma_1(\mathbf H))+g_q(\sigma_2(\mathbf H))$ is restricted proximal regular.	
\end{restatable}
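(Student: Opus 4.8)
My plan is to treat $r$ as a \emph{spectral} (unitarily invariant) function of the $2\times 2$ matrix $\mathbf H$, namely $r=\phi\circ\sigma$ with the absolutely symmetric map $\phi(s_1,s_2)=g_q(s_1)+g_q(s_2)$, and to reduce the matrix inequality in \cref{def:resp} to a one-dimensional statement about $g_q$ evaluated at the singular values. Two classical ingredients are needed. First, the Lewis--Sendov subdifferential calculus for absolutely symmetric functions of singular values, which gives that every $\mathbf D\in\partial r(\mathbf X)$ has the form $\mathbf D=\mathbf U\,\mathrm{diag}(\mu_1,\mu_2)\,\mathbf V^{T}$ for some simultaneous singular value decomposition $\mathbf X=\mathbf U\,\mathrm{diag}(\sigma_1,\sigma_2)\,\mathbf V^{T}$, with $\mu_i\in\partial g_q(\sigma_i)$. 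Second, Mirsky's inequality $\sum_i(\sigma_i(\mathbf Y)-\sigma_i(\mathbf X))^2\le\|\mathbf Y-\mathbf X\|_F^2$, which lets me convert a quadratic bound in the singular values into one in $\|\mathbf Y-\mathbf X\|_F$. I would also record from \cref{lit1} that $\partial g_q\subseteq[-1,1]$ (the corner at $0$ is the minimum of the even function $g_q$, and $g_q'\le g_q'(0^+)=1$ by concavity), so $\|\mathbf D\|_F\le\sqrt2$ on \emph{all} of $\Omega$; hence the restriction $\|p\|\le M$ is innocuous here and the real work is the quadratic lower bound.

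The scalar core is the claim that $g_q$ is \emph{weakly convex} on the relevant range, i.e.\ that $g_q(t)+\tfrac{\gamma}{2}t^2$ is convex for a suitable $\gamma=\gamma(M,\Omega)$. Because $g_q$ is concave on $(0,\infty)$ with $g_q''<0$, this amounts to a uniform lower bound $g_q''\ge-\gamma$. On a bounded $\Omega$ the singular values of the admissible $\mathbf X$ lie in a compact set $\{0\}\cup[\,\delta,R\,]$, and on $[\delta,R]$ continuity of $g_q''$ supplies the bound; the only delicate point is the kink at $0$, which is harmless because it is a \emph{convex} corner (a global minimum with $\partial g_q(0)=[-1,1]$) and so does not spoil weak convexity. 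Equivalently, and more cleanly, one may read the bound directly off the shrinkage operator: differentiating the prox relation $x=t+\rho g_q'(t)$ shows $g_q''$ is bounded below, giving a single global modulus $\gamma$. The upshot is the one-point inequality $g_q(b)-g_q(a)-\mu(b-a)\ge-\tfrac{\gamma}{2}(b-a)^2$ for every $a=\sigma_i(\mathbf X)$, every $\mu=\mu_i\in\partial g_q(a)$, and every $b\ge 0$.

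To assemble the matrix statement I would introduce the convex majorant $R(\mathbf H)\df\sum_i h(\sigma_i(\mathbf H))$ with $h(t)=g_q(t)+\tfrac{\gamma}{2}t^2$. Since $h$ is even and convex, $R$ is a convex unitarily invariant function of $\mathbf H$ (the von Neumann/Davis characterization), and $\partial R(\mathbf X)=\partial r(\mathbf X)+\gamma\mathbf X$. Writing $\mathbf G=\mathbf D+\gamma\mathbf X\in\partial R(\mathbf X)$, the convex subgradient inequality $R(\mathbf Y)\ge R(\mathbf X)+\langle\mathbf G,\mathbf Y-\mathbf X\rangle$ rearranges, after substituting $R=r+\tfrac{\gamma}{2}\|\cdot\|_F^2$ and simplifying the quadratic terms, into exactly $r(\mathbf Y)-r(\mathbf X)-\langle\mathbf D,\mathbf Y-\mathbf X\rangle\ge-\tfrac{\gamma}{2}\|\mathbf Y-\mathbf X\|_F^2$, which is \cref{def:resp}. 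The advantage of routing through the convex $R$ is that the linear lower bound comes for free from convexity, so I never have to match singular values by hand.

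The step I expect to be the main obstacle is the spectral transfer at \emph{degenerate} configurations. When $\sigma_1=\sigma_2$ or some $\sigma_i=0$, the individual singular values are not differentiable in $\mathbf H$ and $\partial r(\mathbf X)$ is genuinely set-valued, so both the Lewis--Sendov representation of $\mathbf D$ and the convexity of $R$ must be justified there rather than assumed. A naive alternative that instead bounds $\langle\mathbf D,\mathbf Y-\mathbf X\rangle$ through von Neumann's trace inequality runs into a rearrangement problem, because the concavity of $g_q$ makes the multipliers $\mu_i$ ordered oppositely to the $\sigma_i(\mathbf X)$, so the trace inequality pairs them in the wrong order; the convex-majorant route is what avoids this. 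The remaining burden is then to secure the uniform modulus $\gamma$ without a closed form for $g_q$, and this is precisely where the boundedness of $\Omega$ and the restriction to bounded-subgradient iterates are used, confining the singular values to a compact region away from the steep part of $g_q$ near $0$. By comparison, Mirsky's inequality and the scalar one-point bound are routine.
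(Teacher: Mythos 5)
Your route (a weakly convex scalar bound lifted to a convex spectral majorant $R=r+\tfrac{\gamma}{2}\|\cdot\|_F^2$ via the Lewis--Davis convexity theorem) is genuinely different from the paper's, but it has a real gap at its scalar core. You assert that $g_q''$ is uniformly bounded below, and that $g_q'\le g_q'(0^+)=1$ so the bounded-subgradient restriction is ``innocuous.'' Both claims contradict the property the paper works with: it posits that $g_q':(0,\infty)\rightarrow(0,\infty)$ is a bijection, i.e.\ $g_q'(t)\rightarrow\infty$ as $t\downarrow 0$, and then $g_q''$ cannot be bounded below near $0$ (a lower bound $g_q''\ge-\gamma$ on $(0,\delta)$ would force $g_q'$ to be bounded there). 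Under that hypothesis no global modulus $\gamma$ exists, $h=g_q+\tfrac{\gamma}{2}t^2$ is not convex, $R$ is not convex, and the subgradient inequality you rely on is unavailable --- exactly at the degenerate configurations ($\sigma_i$ near $0$) that you yourself flagged as the main obstacle. Note also that restricted proximal regularity in \cref{def:resp} is deliberately weaker than weak convexity: the model case is $|t|^q$, $q\in(0,1)$, whose second derivative blows down at the origin, and the restriction $\|p\|\le M$ is not decoration but the engine of the paper's proof. There, if some $\sigma_i(A)<\epsilon_0=\tfrac{1}{3}(g_q')^{-1}(M)$, then $T_1=U_AS'V_A^T$ with $(S')_{kk}=g_q'((S_A)_{kk})$ is a subgradient of norm exceeding $M$, contradicting $A\in\Omega_M$; this is what confines $\sigma_i(A)\ge 3\epsilon_0$, and it is precisely the mechanism you discard. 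Moreover, even if you localize, your localization controls only the singular values of $x$: the inequality must hold for every $y\in\Omega$, whose singular values can be arbitrarily close to $0$, so convexity of $R$ on a compact set $[\delta,R]$ bounded away from the origin does not feed the subgradient inequality at such $y$.

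For contrast, the paper never claims a global modulus: it uses a crude bound when $\|B-A\|\ge\epsilon_0$ (via $r\le R_{max}$ on $\Omega$ and $\|T\|\le M$), and when $\|B-A\|<\epsilon_0$ it splits $B=B_1+B_2$ to isolate the small singular values, invokes Lipschitz continuity of the spectral operator $F:X\mapsto U_XD_X'V_X^T$ only on the compact set $\Omega'$ bounded away from zero, and controls the cross terms with singular-subspace perturbation bounds; the Lipschitz constant $L$ plays the role of your $\gamma$, but only on $\Omega'$. One caveat in your favor: if you differentiate the prox relation $x=t+\rho g_q'(t)$ for the \emph{concrete} Chartrand $q$-shrinkage, you get $s_q'\le 2-q$ and hence $g_q''\ge-(1-q)/(\rho(2-q))$, which would validate your construction and even yield the inequality without any restriction; but this computation is incompatible with the paper's stated property $(g_q')^{-1}:(0,\infty)\rightarrow(0,\infty)$ and with its choice of $\epsilon_0$ for arbitrary $M>0$, so within the paper's abstract framework for $g_q$ your key step stands unproven, and an appeal to continuity of $g_q''$ on a compact set containing the origin cannot settle it. As proposed, the proof does not go through.
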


\begin{proof}We use the following result \cite{watson1992characterization} for the sub-gradient:
Let $H\in \mathbb R^{2\times 2}$ and $H=[U \ U_1] \begin{bmatrix}
S &\mathbf {0}   \\
\mathbf{0} &\mathbf {0}  \\

\end{bmatrix}[V \ V_1]^T$ be the singular value decomposition (SVD) of $H$, $r(H)=\sum_{i=1}^2g_q(S_{ii}(\mathbf H)),$ then $UDV^T+U_1\Theta V_1^T\in \partial r(H),$ where   $D$ is diagonal matrix with entries $(D)_{ii}=g_q'(S_{ii}), \text{ and }\Theta$ is any arbitrary matrix.

Without loss of generality we choose, $\Omega=\Set{X\in \mathbb R^{2\times 2}}{\|X\|\le P}$. Now, for any $M,P>0$, we intend to show that$$r(B)-r(A)-\langle T,B-A\rangle\ge-\frac{\gamma}{2}\|B-A\|^2 $$
for all $B\in \Omega,A \in  \Omega_M \df \Set{X\in \Omega} {\|T\|\le M, \ \ \ \forall T \ \ \ \in \partial r(X)}$,  and for all $T\in \partial r(A) $  		. We do this in the following cases: \begin{description}

\item[Case 1:]$\|B-A\|\ge \epsilon_0=\frac{1}{3}(g_q')^{-1}(M) $ 	 Note that the above condition is equivalent to \begin{align}\label{eq:ch2:case1}\frac{-\|B-A\|}{\epsilon_0}\le-1.\end{align}
First, it can be observed that,
\begin{align}
r(B)-r(A)-\langle T,B-A&\rangle\stackrel{a}{\ge} -r(A)+\|T\|\|B-A\|,\\&\stackrel{b}{\ge} -R_{max}-M\|B-A\|. 
\end{align}
Here, $(a)$ is true as $r(\cdot)$ is non-negative and by Cauchy-Scwartz inequality; while $(b)$ is true as $T$ is bounded and $r(\cdot)$ is a continuous function on a bounded and closed set, therefore, it  attains maximum $R_{max}.$ Now, using \cref{eq:ch2:case1} we obtain
\begin{align}\label{eq:ch2:gamma1}
r(B)-r(A)-\langle T,B-A\rangle{\ge} -\big(\frac{R_{max}+M\epsilon_0}{\epsilon_0^2}\big)\|B-A\|^2.
\end{align}
\item[Case 2:] $\|B-A\|< \epsilon_0$. To prove this, we first define $\Omega'=\Set{T\in \mathbb R^{n\times n}}{\|T\|\le P,\ \min_{i}\sigma_i(T)\ge \epsilon_0}$. \end{description} Now, we decompose $B=U_B\Sigma_BV_B^T$, $B=U_B\Sigma_B^{(1)}V_B^T+U_B\Sigma_B^{(2)}V_B^T=B_1+B_2,$ where all singular values of $ B_1$ are greater than $\epsilon_0.$ Clearly, $B_1\in \Omega'.$ We show that $A\in \Omega'.$ This is proved by contradiction. Assume the contrary that $A\notin \Omega'.$ This means, $\exists \ i$ such that $\sigma_i(A)<\epsilon_0\implies \sigma_i(A)<3\epsilon_0. $ Now, since $g_q'(\cdot)$ is non-increasing, we have $ g_q'(\sigma_i(A))>g_q'(3\epsilon_0)=g_q'((g_q')^{-1}(M))=M. $ Let $A=U_AS_AV_A^T$ be the singular value decomposition of A. Define $T_1\df U_AS'V_A^T,$ where $\{S'\}_{kk}=g_q'(\{S_A\}_{kk})$ for all $k$. Now, by lemma $T_1\in \partial r (A).$ But, $\|T_1\|\ge \|S'\|\ge g_q'(\sigma_i(A))>M. $ This contradicts the fact that $A\in \Omega_M.$ Hence, $A\in \Omega'.$ Now, we define a function, $F:\mathbb R^{n\times n}\rightarrow \mathbb R^{n \times n}$, which is defined as $$F: X \mapsto U_X D_X' V_X^T.$$ Here, $X=U_XD_XV_X^T$ is the singular value decomposition of $X$ and $D_X'$ is a diagonal matrix which is defined as $(D_X')_{ii}=g_q'((D_X)_{ii})$.  Since $F$ is continuous on compact set on $\Omega'$, it is Lipschitz continuous on $\Omega'$ \cite{ding2018spectral}, this means $\|F(B)-F(A)\|\le L\|B-A\|$.  Now, by Taylor's expansion we have:\begin{align}\label{ch2:eq:pf1}
r(B_1)-r(A)-\langle B_1-A,U_AS'V_A^T\rangle\ge \frac{-L}{2}\|B_1-A\|^2 \end{align} . Now, $\|U_2^TU_B\|\le\frac{\|A-B_1\|}{\epsilon_0}$ and $\|V_2^TV_B\|\le\frac{\|A-B_1\|}{\epsilon_0}$ \cite{li2000new}. 
Now, \begin{align}\label{ch2:eq:pf2}\langle U_2^T\Theta V_2,B_1-A\rangle=\langle \Theta ,U_2U_B^T\Sigma_B^{(1)}V_BV_2^T\rangle\ge- \frac{M^2\| B_1-A|\|^2}{\epsilon_0^2}.\end{align}
Also, $r(B_2)- <T,B_2> \ge 0$ and by triangle inequality we get:$\|B_1-A\|\le \|B_1-B\|+\|B-A\|\le 2\|B-A\|.$ Adding \cref{ch2:eq:pf1} and \cref{ch2:eq:pf2} we get,
\begin{align}
r(B)-r(A)-\langle B-A,T \rangle \ge- (\frac{L}{2}+\frac{4M^2}{\epsilon_0^2})\|B-A\|^2. 
\end{align}
\end{proof}

\subsection{Properties of Restoration cost}\label{ch2:sec:theory:restoration}
The following helps us to establish that the restoration cost is coercive.
\begin{claim}\label{ch2:claim:lb}
If $\mathcal N(\mathcal T)\cap \mathcal N(\mathcal  H)=\{\mathbf 0\}$ the function $\|\mathcal T \mathbf u\|+\|\mathcal {H}\mathbf u\|_*\ge \gamma \|\mathbf u\|, \text {where } \gamma>0.$ Here, $\|\mathcal {H}\mathbf u\|_*$ is $\mathcal HS_1(\bn u),$ the conventional $l-1$ Hessian Schatten norm.
\end{claim}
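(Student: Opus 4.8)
The plan is to recognize that the left-hand side defines a continuous, absolutely homogeneous function on the finite-dimensional space $\mathbb R^N$ that vanishes exactly on $\mathcal N(\mathcal T)\cap\mathcal N(\mathcal H)$, and then to run the standard compactness argument behind the equivalence of norms. Write $N(\mathbf u)\df \|\mathcal T\mathbf u\|+\|\mathcal H\mathbf u\|_*$. First I would record the two structural facts needed: (i) $N$ is continuous, being the sum of compositions of the linear maps $\mathcal T,\mathcal H$ with the Euclidean norm and the per-pixel-summed nuclear norm; and (ii) $N$ is absolutely homogeneous, $N(\alpha\mathbf u)=|\alpha|\,N(\mathbf u)$, which follows from linearity of $\mathcal T$ and $\mathcal H$ together with the absolute homogeneity of the Euclidean norm and of the nuclear norm. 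Note that I do not even need the triangle inequality for $N$; continuity, homogeneity, and positivity off the origin suffice.

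By homogeneity it is enough to bound $N$ from below on the unit sphere $\Sigma=\Set{\mathbf u\in\mathbb R^N}{\|\mathbf u\|=1}$. Since $\mathbb R^N$ is finite-dimensional, $\Sigma$ is compact, so the continuous function $N$ attains its infimum on $\Sigma$ at some $\mathbf u_0$; set $\gamma\df N(\mathbf u_0)=\min_{\mathbf u\in\Sigma}N(\mathbf u)$.

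The crux of the argument is to show $\gamma>0$, which I would do by contradiction using the null-space hypothesis. Suppose $\gamma=0$, so that $N(\mathbf u_0)=0$ with $\|\mathbf u_0\|=1$. Both terms of $N$ are nonnegative, hence each vanishes: $\|\mathcal T\mathbf u_0\|=0$ gives $\mathbf u_0\in\mathcal N(\mathcal T)$, while $\|\mathcal H\mathbf u_0\|_*=\sum_{\mathbf r}\big(\sigma_1([\mathcal H\mathbf u_0]_{\mathbf r})+\sigma_2([\mathcal H\mathbf u_0]_{\mathbf r})\big)=0$ forces every $[\mathcal H\mathbf u_0]_{\mathbf r}=0$, because the nuclear norm is a genuine matrix norm that vanishes only at the zero matrix; thus $\mathbf u_0\in\mathcal N(\mathcal H)$. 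Therefore $\mathbf u_0\in\mathcal N(\mathcal T)\cap\mathcal N(\mathcal H)=\{\mathbf 0\}$, contradicting $\|\mathbf u_0\|=1$. Hence $\gamma>0$.

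Finally I would undo the normalization: for any $\mathbf u\neq\mathbf 0$, applying the sphere bound to $\mathbf u/\|\mathbf u\|$ and invoking homogeneity yields $N(\mathbf u)\ge\gamma\|\mathbf u\|$, and the inequality holds trivially at $\mathbf u=\mathbf 0$. The only point demanding care—rather than a genuine obstacle—is item (ii) together with the kernel identification: confirming that the per-pixel sum of nuclear norms is absolutely homogeneous in $\mathbf u$ and that it vanishes precisely on $\mathcal N(\mathcal H)$. Once these are in place, the result is the textbook fact that a continuous, absolutely homogeneous function which is strictly positive off the origin dominates a positive multiple of $\|\cdot\|$ on all of $\mathbb R^N$.
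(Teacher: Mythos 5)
Your proposal is correct and follows essentially the same route as the paper's own proof: normalize to the unit sphere, use compactness to attain the infimum $\gamma$, and derive a contradiction with the null-space hypothesis $\mathcal N(\mathcal T)\cap\mathcal N(\mathcal H)=\{\mathbf 0\}$ if $\gamma=0$. Your write-up is merely more explicit about the homogeneity of the left-hand side and the fact that the summed nuclear norms vanish only on $\mathcal N(\mathcal H)$, details the paper leaves implicit.
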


\begin{proof}
The above statement is trivial if $\mathbf u=\mathbf 0.$ If $\mathbf u\neq \mathbf 0$, let $\hat{\mathbf u }=\frac{\mathbf u}{\|\mathbf u\|}$, then $\|\mathcal T \hat {\mathbf u}\|+\|\mathcal H\hat{\mathbf {u}}\|_*\ge \inf_{\|p\|=1}\|\mathcal T  {\mathbf p}\|+\|\mathcal H{\mathbf {p}}\|_*$. Since, $\|\mathbf p\|=1$ is a compact set, $\exists \mathbf p_{min}$ (with $\|\mathbf p_{min}\|=1$) such that $\inf_{\|p\|=1} \|\mathcal T  {\mathbf p}\|+\|\mathcal H{\mathbf {p}}\|_*=\|\mathcal T  {\mathbf p_{min}}\|+\|\mathcal H{\mathbf p_{min}}\|_*$. Define, $\gamma=\|\mathcal T  {\mathbf p_{min}}\|+\|\mathcal H{\mathbf p_{min}}\|_*.$ Clearly, $\gamma\neq 0$ as we will get a vector in intersection of the null spaces, i.e. $\mathbf p_{min}\in \mathcal N(\mathcal T)\cap \mathcal N(\mathcal  H)$, this contradicts the hypothesis. Re-substituting $\hat{\mathbf u }=\frac{\mathbf u}{\|\mathbf u\|}$ completes the proof.
\end{proof}

\coercive*
\begin{proof}
Without loss of generality, we prove the theorem for $\rho=1.$
Consider the level set $\mathcal L_{\eta} (f)\df \Set{\mathbf x}{f(\mathbf x)\le \eta}. $ Now, if $f(\mathbf x)\le \eta \implies \frac{1}{2}\|\mathcal T\mathbf x-\mathbf m\|^2\le \eta.$ Now, by triangle inequality
\begin{align}\label{ch2:eq:corcv1} 
\|\mathcal T\mathbf x\|\le \sqrt{2\eta}+\|\mathbf m\|.
\end{align}
Since, $\mathbf u \in  \mathcal L_{\eta} (f)\implies g_q(\sigma_i([\mathcal H \mathbf u]_{\mathbf r})) \le \eta \ \ \  \forall \mathbf r \text { and } i=1,2.$ As $g_q(\cdot)$ is coercive for $q\in (0,1)$, we have $\sigma_i([\mathcal H \mathbf u]_{\mathbf r})\le M$ for some $M>0.$ This above statement is true because of the fact that any level set of a coercive function is compact. By Taylor's series of $g_q$ around $0$ we can see that: $g_q(\sigma_i([\mathcal H \mathbf u]_{\mathbf r}))=g_q'(\gamma_{i,\mathbf r})\sigma_i([\mathcal H \mathbf u]_{\mathbf r})$ for $\gamma_{i,\mathbf r}\in [0,2M)$. Since, $g_q'(\cdot )$ is decreasing, therefore, $g_q'(\gamma_{i,\mathbf r})\ge g_q'(2M)\df C_{2M}. $ Hence,  $\mathbf x \in  \mathcal L_{\eta} (f) \implies C_{2M}\|\mathcal H\mathbf u\|_* \le \sum_{\mathbf r}g_q(\sigma_1([\mathcal H \mathbf u]_{\mathbf r}))+g_q(\sigma_2([\mathcal H \mathbf u]_{\mathbf r})\le \eta.$ Now, we use the following  \cref{ch2:claim:lb} to show that the level set $\mathcal L_{\eta}(f)$ is bounded. Using, \cref{ch2:claim:lb} we get  \begin{align}  \gamma \|\mathbf u\|\le	\|\mathcal T\mathbf u\|+\|\mathcal H\mathbf u\|_* \le \frac{\eta}{C_{2M}}+\sqrt{2\eta}+\|\mathbf m\|.	\end{align}

This means the level set $\mathcal L_{\eta}(f)$ is bounded. Combining with the fact the level set is closed as $f(\cdot)$ is continuous implies that the level set is compact. As this is true for any level set, therefore $f$ is coercive.\end{proof}
\subsection {Proof of convergence}

\begin{algorithm}\caption{Non-convex ADMM}
\label{ch2:alg:ncadmm}
$N_{iter}\gets 1000$\;
$\bn u^{(0)}=\bn 0$\;
$\bn v^{(0)}=\bn 0$\;
$\bn w^{(0)}=\bn 0$\;

\If{$i\leq N_{iter}$}
{
$\bn u^{(i+1)}\gets\argmin _{\bn u}\mathcal L_{\beta}(\bn u,\bn v^{(i)},\bn w^{(i)})$\;
$\bn v^{(i+1)}\gets\argmin _{\bn v}\mathcal L_{\beta}(\bn u^{(i+1)},\bn v,\bn w^{(i)})$\;
$\bn w^{(i+1)}\gets \bn w^{(i)}+\beta(\bn A\bn u^{(i+1)}+\bn B\bn v^{(i+1)})$\;
$i\gets i+1$\;

}{

}
\end{algorithm}
We  will use the following theorem by \cite{wang2019global} to show the convergencve of our algorithm.
\begin{littheorem}(\cite{wang2019global}, theorem 2.2)\label{ch2:th:wang}
Consider the minimization of the function $\phi(\mathbf u,\bn v)= h(\bn v)+g(\bn v)$ subject to $\bn A \bn u+\bn B \bn v=\bn 0$ by non-convex ADMM algorithm (\Cref{ch2:alg:ncadmm}). Define the augmented Lagrangian, $\mathcal L_{\beta}(\bn u,\bn v, \bn w)\df\phi(\bn u,\bn v)+\bn w^T(\bn A\bn u+\bn B\bn v)+\frac{\beta}{2}\|\bn A\bn u+\bn B\bn v\|^2.$ If \begin{description}
\item[\textbf{C1}]: $\phi(\bn u,\bn v)$ is coercive on the set $\Set{(\bn u,\bn v)}{\bn A \bn u+\bn B \bn v=\bn 0};$
\item [\textbf{C2}]: $Im(A)\subset Im(B)$, where $Im$ denotes the image of the linear operator;
\item [\textbf{C3}]:$\bn A$ and $\bn B$ are full column rank;
\item [\textbf{C4}]: $g$ is restricted proximal regular, and
\item  [\textbf{C5}]:$h$ is Lipschitz smooth,

\end{description}	then algorithm generates a bounded sequence that has atleast one limit point, and each limit point is a stationary point of $\mathcal L_{\beta}(\cdot)$
\end{littheorem}
Now, we show the convergence of algorithm using the above theorem.

\convergence*
\begin{proof}We establish the validity of the aforementioned theorem by fulfilling the conditions (\textbf{(C1)-(C5)}) of \cref{ch2:th:wang}. In comparison to the splitting presented in \cref{ch2:sec:restoration}, we set \(h \equiv \frac{1}{2}\|\mathcal{T}(\cdot) - \mathbf{m}\|_2^2\) and \(g \equiv \rho\sum_{\mathbf r} f(\cdot) + I_S(\cdot)\). The operator \(\mathbf A\) is analogous to a linear operator, satisfying \(\mathcal{A}(\mathbf{u}) - \mathbf{v} = \mathbf{0}\). For each pixel location \(\mathbf{r}\), \(\mathcal{A}:[\mathbf{u}]_{\mathbf{r}} \rightarrow ([\mathcal{H}\mathbf{u}]_{\mathbf{r}}, [\mathbf{u}]_{\mathbf{r}})\). Regarding the constraints, it is evident that \(\mathbf B\) corresponds to the negative identity matrix.

\textbf{(C1)} follows from \cref{ch2:lemma:restoration-coercive}, and \textbf{(C4)} follows from \cref{ch2:thm:rpr}. \textbf{(C2)} is trivial since \(\mathbf B\) is the negative identity. \textbf{(C5)} holds true due to the quadratic nature of the data-fitting cost. To ascertain \textbf{(C3)}, we must demonstrate that \(\mathcal{N}(\mathbf{B}) = \mathbf{0}\) and \(\mathcal{N}(\mathbf{A}) = \mathbf{0}\). Since \(\mathbf{B}\) is the negative identity matrix, \(\mathcal{N}(\mathbf{B}) = \mathbf{0}\). For \(\mathbf{A}\), let \(\mathcal{A}(\mathbf{z}) = \mathbf{0}\), implying \([ \mathcal{H}\mathbf{z} ]_{\mathbf{r}} = \mathbf{0}\) and \([ \mathbf{z} ]_{\mathbf{r}} = \mathbf{0}\) for all \(\mathbf{r}\), which concludes \(\mathbf{z} = \mathbf{0}\). Hence, all the conditions are met. The next step is to establish that the stationary point of the augmented Lagrangian coincides with that of the restoration cost. Suppose \((\mathbf{u}^*, \mathbf{v}^*, \mathbf{w}^*)\) is the stationary point of the augmented Lagrangian; this implies:
\begin{enumerate}
\item $\mathcal A\bn u^*-\bn v^*=\bn 0$,
\item $\nabla h(\bn u^*)+\mathcal A^T\bn w^*=0,$ and
\item $\partial g(\bn v^*)- \bn w^* \ni \bn 0.$
\end{enumerate}
Rearranging 2 and 3 we obtain $\nabla h(\bn u^*)\in -\mathcal A^T\partial g(\bn v^*) $. Now, we use 1 to get \begin{align}&-\nabla h(\bn u^*)-\mathcal A^T\partial (\mathcal A\bn u^*)\ni \bn 0\\&\implies  -\partial f(\bn u^*)\ni \bn 0\\&\implies   \partial f(\bn u^*)\ni \bn 0. \end{align}
\end{proof}
\section{References}
\bibliography{References}{}
\bibliographystyle{iopart-num}

\end{document}